\documentclass[12pt,a4paper]{amsart}

\newtheorem{theo+}              {Theorem}           [section]
\newtheorem{prop+}  [theo+]     {Proposition}
\newtheorem{coro+}  [theo+]     {Corollary}
\newtheorem{lemm+}  [theo+]     {Lemma}
\newtheorem{exam+}  [theo+]     {Example}
\newtheorem{rema+}  [theo+]     {Remark}
\newtheorem{defi+}  [theo+]     {Definition}
\def \r{\mbox{${\mathbb R}$}}

\theoremstyle{plain}
\newtheorem{thm}{Theorem}
\newtheorem{prop}{Proposition}
\newtheorem{cor}{Corollary}

\theoremstyle{definition}

\newtheorem{rk}{Remark}

\usepackage{amsthm}
\theoremstyle{plain} \theoremstyle{remark}
\newtheorem{remark}{Remark}
\newtheorem{example}{Example}

\def\E{/\kern-1.0em \equiv }

\evensidemargin  5mm \oddsidemargin  5mm \textwidth  145mm
\textheight 209mm

\linespread{1.1}

\title{On  conformal biharmonic immersions}
\author{Ye-Lin Ou$^{*}$ }

\address{Department of Mathematics,\newline\indent
Texas A $\&$ M University-Commerce,\newline\indent Commerce, TX
75429, U S A.\newline\indent E-mail:yelin$\_$ou@tamu-commerce.edu}

\thanks{$^*$ Supported by Texas A $\&$ M University-Commerce
Faculty Development Program (2008)}
\begin{document}

\title[On Conformal biharmonic immersions]{On Conformal biharmonic immersions}

\subjclass{58E20} \keywords{biharmonic maps, conformal biharmonic
immersions, biharmonic submanifolds, Jacobi operator.}
\thanks{}
\date{08/18/08}

\maketitle
\section*{Abstract}
\begin{quote}
{\footnotesize  This paper studies conformal biharmonic immersions.
We first study the transformations of Jacobi operator and the
bitension field under conformal change of metrics. We then obtain an
invariant equation for a conformal biharmonic immersion of a surface
into Euclidean $3$-space. As applications, we construct a
$2$-parameter family of non-minimal conformal biharmonic immersions
of cylinder into $\r^3$ and some examples of conformal biharmonic
immersions of $4$-dimensional Euclidean space into sphere and
hyperbolic space thus provide many simple examples of proper
biharmonic maps with rich geometric meanings. These suggest that
there are abundant proper biharmonic maps in the family of conformal
immersions. We also explore the relationship between biharmonicity
and holomorphicity of conformal immersions of surfaces.}
\end{quote}

\section{Introduction}
This paper works on the smooth objects, so we assume that
manifolds, maps, vector fields, etc, are smooth unless it is stated otherwise.\\

A biharmonic map is a map $\varphi:(M, g)\longrightarrow (N, h)$
between Riemannian manifolds that is a critical point of the
bienergy functional
\begin{equation}\nonumber
E^{2}\left(\varphi,\Omega \right)= \frac{1}{2} {\int}_{\Omega}
\left|\tau(\varphi) \right|^{2}{\rm d}x
\end{equation}
for every compact subset $\Omega$ of $M$, where $\tau(\varphi)={\rm
Trace}_{g}\nabla {\rm d} \varphi$ is the tension field of $\varphi$.
The Euler-Lagrange equation of this functional gives the biharmonic
map equation (\cite{Ji1})
\begin{equation}\label{BI1}
\tau^{2}(\varphi):={\rm
Trace}_{g}(\nabla^{\varphi}\nabla^{\varphi}-\nabla^{\varphi}_{\nabla^{M}})\tau(\varphi)
- {\rm Trace}_{g} R^{N}({\rm d}\varphi, \tau(\varphi)){\rm d}\varphi
=0,
\end{equation}
which states the fact that the map $\varphi$ is biharmonic if and
only if its bitension field $\tau^{2}(\varphi)$ vanishes
identically. In the above equation we have used $R^{N}$ to denote
the curvature operator of $(N, h)$ defined by
$$R^{N}(X,Y)Z=
[\nabla^{N}_{X},\nabla^{N}_{Y}]Z-\nabla^{N}_{[X,Y]}Z.$$

\indent Harmonic maps are clearly biharmonic, so it is more
interesting to study {\em proper} (meaning non-harmonic) biharmonic
maps as far as one seeks to pursuit a new study. However, apart from
the maps between Euclidean spaces defined by polynomials of degree
less than four (a class of maps that seems so wild to exhibit any
characteristic property) not many examples of proper biharmonic maps
between Riemannain manifolds have been found (see, e.g., \cite{MO},
\cite{LO2}, \cite{Ou1}, and the bibliography of biharmonic maps
\cite{LMO}). So, currently, one priority and a practical thing to do
seems to be finding more examples of proper biharmonic maps between
certain model spaces or studying biharmonic maps under some
geometric constraints. For example, one can study biharmonic
isometric immersions which lead to the concept of biharmonic
submanifolds (see e.g., \cite{Ji2}, \cite{CH}, \cite{CI},
 \cite{CMO1}, \cite{CMO2}, \cite{MO} and \cite{BMO}); one can also study, as in
 \cite{BK}, \cite{BFO},  \cite{LO2}, horizontally weakly conformal biharmonic maps which
generalize both the notion of harmonic morphisms (maps that are both
horizontally weakly conformal and harmonic) and that of biharmonic
morphisms (maps that are horizontally weakly conformal biharmonic
with other constraints, see \cite{OU1}, \cite{LO1}, \cite{Ou1}, and \cite{LO2} for details).\\

The interesting link between harmonicity and conformality has a long
history. It was known to Weierstrass that a conformal immersion
$\varphi: M^2\longrightarrow \r^{3}$ is harmonic if and only if
$\varphi(M)$ is a minimal submanifold of $\r^3$. It is also well
known that {\bf conformal harmonic immersions} of surfaces are
precisely {\bf conformal minimal immersions} of surfaces of which
there has been a rich theory exhibiting a beautiful interplay among
geometry, topology, and real and complex analysis.  So it would be
interesting to know if we can generalize (or use the tools of) the
theory on conformal minimal immersions to conformal biharmonic
immersions. On the other hand, Jiang and Chen-Ishikawa independently
proved that an isometric immersion $\varphi: M^2\longrightarrow
\r^{3}$ is biharmonic if and only if $\varphi$ is harmonic. It would
also be interesting to know whether this result can be generalized
to the case of conformal biharmonic immersions. Motivated by these,
we study conformal biharmonic immersions in this paper. First, we
study the transformations of Jacobi operator and bitension field
under conformal change of metrics. We then obtain an invariant
equation for a conformal biharmonic immersion of a surface into
Euclidean $3$-space, and using this, we construct a $2$-parameter
family of non-minimal conformal biharmonic immersions of cylinder
into $\r^3$ and some examples of conformal immersions of
$4$-dimensional Euclidean space into sphere and hyperbolic space,
thus provide many simple examples of proper biharmonic maps with
rich geometric meanings. We also explore Weierstrass type
representations for conformal biharmonic immersions.

\section{Jacobi operators and the bitension fields under conformal change of metrics}

For a map $\varphi : (M^{m},g) \longrightarrow (N^{n},h)$, the
Jacobi operator is defined as
\begin{equation}\label{JO}
J^{\varphi}_{g}(X)=-\{{\rm
Trace}_{g}(\nabla^{\varphi}\nabla^{\varphi}-\nabla^{\varphi}_{\nabla^{M}})X
- {\rm Trace}_{g} R^{N}({\rm d}\varphi, X){\rm d}\varphi\}
\end{equation}
for any vector field $X$ along the map $\varphi$. Thus, by
(\ref{BI1}) and (\ref{JO}), the relationship between the Jacobi
operator and the bitension field of $\varphi$ is explained by
$J^{\varphi}_{g}(\tau(\varphi))=-\tau^2(\varphi)$.
\begin{thm}\label{confC}
Let $\varphi : (M^{m},g) \longrightarrow (N^{n},h)$ be a map. Then,
under the conformal change of metrics ${\bar g}=F^{-2}g$, we have
\begin{itemize}
\item[(I)] the transformation of the Jacobi operators $J^{\varphi}_{g}$ and $J^{\varphi}_{\bar g}$ of
$\varphi$ is given by
\begin{equation}\label{JF}
J^{\varphi}_{\bar g}(X)= F^2J^{ \varphi}_{g}(X)+F^2(m-2)\nabla^{
\varphi}_{{\rm grad\,ln}F}X,
\end{equation}
and
\item[(II)] the transformation of the bitension fields $\tau^{2}(\varphi, g)$ and  $\tau^{2}(\varphi,{\bar g})$ of
$\varphi$ is given by
\begin{eqnarray}\label{tao2}
&&\tau^{2}(\varphi,{\bar g})= F^4\{\tau^{2}(\varphi, g)+(m-2)J^{
\varphi}_{g}({\rm d}{\varphi}({\rm grad\,ln}F))\\\notag && +
2(\Delta {\rm ln}F-(m-4)\left|{\rm grad\,ln}F\right|^2)\tau(\varphi,
g)-(m-6)\nabla^{ \varphi}_{{\rm grad\,ln}\,F}\,\tau(\varphi,
g)\\\notag && -2(m-2)(\Delta {\rm ln}F-(m-4)\left|{\rm
grad\,ln}F\right|^2){\rm d}{\varphi}({\rm grad\,ln}F)\\\notag
&&+(m-2)(m-6)\nabla^{ \varphi}_{{\rm grad\,ln}\,F}\,{\rm
d}{\varphi}({\rm grad\,ln}F)\},
\end{eqnarray}
where ${\rm grad}$ and $\Delta$ denote the gradient and the
Laplacian taken with respect to the metric $g$.
\end{itemize}
\end{thm}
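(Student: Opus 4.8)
The plan is to reduce both statements to two standard ingredients: the Koszul formula relating the Levi--Civita connections of $g$ and $\bar g=F^{-2}g$, and the Leibniz rule for the rough Laplacian $\Delta^{\varphi}:={\rm Trace}_g(\nabla^{\varphi}\nabla^{\varphi}-\nabla^{\varphi}_{\nabla^M})$ on sections of $\varphi^{-1}TN$. Throughout, fix a local $g$-orthonormal frame $\{e_i\}$ on $M$, so that $\{F e_i\}$ is $\bar g$-orthonormal, and write $\bar g=e^{2f}g$ with $f=-{\rm ln}\,F$ (so $F>0$); the Koszul formula then reads $\bar\nabla^M_XY=\nabla^M_XY-X({\rm ln}\,F)Y-Y({\rm ln}\,F)X+g(X,Y)\,{\rm grad\,ln}\,F$, where ${\rm grad}$, $\Delta$, $|\cdot|$ refer to $g$. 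I will use freely that ${\rm grad}\,F=F\,{\rm grad\,ln}\,F$, ${\rm grad}(F^2)=2F^2\,{\rm grad\,ln}\,F$, and $\Delta(F^2)=2F^2(\Delta{\rm ln}\,F+2|{\rm grad\,ln}\,F|^2)$.

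For (I) I substitute the frame $\{F e_i\}$ into the definition (\ref{JO}) of $J^{\varphi}_{\bar g}$ and treat the two pieces separately. The curvature piece ${\rm Trace}_{\bar g}R^N({\rm d}\varphi,X){\rm d}\varphi=\sum_i R^N({\rm d}\varphi(Fe_i),X){\rm d}\varphi(Fe_i)$ equals $F^2\,{\rm Trace}_g R^N({\rm d}\varphi,X){\rm d}\varphi$. For the rough-Laplacian piece I expand $\sum_i(\nabla^{\varphi}_{Fe_i}\nabla^{\varphi}_{Fe_i}-\nabla^{\varphi}_{\bar\nabla^M_{Fe_i}(Fe_i)})X$: the first sum gives $F^2\Delta^{\varphi}X+F\,\nabla^{\varphi}_{{\rm grad}\,F}X=F^2\Delta^{\varphi}X+F^2\nabla^{\varphi}_{{\rm grad\,ln}\,F}X$, and from the Koszul formula $\sum_i\bar\nabla^M_{Fe_i}(Fe_i)=F^2\bigl(\sum_i\nabla^M_{e_i}e_i+(m-1){\rm grad\,ln}\,F\bigr)$, so the second sum gives $F^2\nabla^{\varphi}_{\sum_i\nabla^M_{e_i}e_i}X+(m-1)F^2\nabla^{\varphi}_{{\rm grad\,ln}\,F}X$. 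Hence the rough-Laplacian piece with respect to $\bar g$ equals $F^2\Delta^{\varphi}X-(m-2)F^2\nabla^{\varphi}_{{\rm grad\,ln}\,F}X$, and reassembling with the overall sign in (\ref{JO}) gives (\ref{JF}).

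For (II) I use $\tau^{2}(\varphi,\bar g)=-J^{\varphi}_{\bar g}(\tau(\varphi,\bar g))$ and feed in two facts. First, the same frame computation applied to $\tau(\varphi)={\rm Trace}_g\nabla{\rm d}\varphi$ gives the classical conformal transformation of the tension field, $\tau(\varphi,\bar g)=F^2\bigl(\tau(\varphi,g)-(m-2){\rm d}\varphi({\rm grad\,ln}\,F)\bigr)$. Second, since $J^{\varphi}_g$ is a genuine second-order operator (not $C^{\infty}(M)$-linear), I need the Bochner-type identity $\Delta^{\varphi}(hY)=(\Delta h)Y+2\nabla^{\varphi}_{{\rm grad}\,h}Y+h\,\Delta^{\varphi}Y$, which with $h=F^2$ and the identities above gives
\[
J^{\varphi}_g(F^2Y)=F^2J^{\varphi}_g(Y)-2F^2\bigl(\Delta{\rm ln}\,F+2|{\rm grad\,ln}\,F|^2\bigr)Y-4F^2\nabla^{\varphi}_{{\rm grad\,ln}\,F}Y .
\]
Now apply (\ref{JF}) with $X=\tau(\varphi,\bar g)$, substitute the expression for $\tau(\varphi,\bar g)$, apply the displayed identity to $J^{\varphi}_g\bigl(F^2(\tau(\varphi,g)-(m-2){\rm d}\varphi({\rm grad\,ln}\,F))\bigr)$, expand $\nabla^{\varphi}_{{\rm grad\,ln}\,F}\bigl(F^2(\cdots)\bigr)$ by Leibniz (using $({\rm grad\,ln}\,F)(F^2)=2F^2|{\rm grad\,ln}\,F|^2$), and replace $J^{\varphi}_g(\tau(\varphi,g))$ by $-\tau^{2}(\varphi,g)$. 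Reading off the coefficients of $\tau^{2}(\varphi,g)$, of $J^{\varphi}_g({\rm d}\varphi({\rm grad\,ln}\,F))$, and of the four lower-order terms $\tau(\varphi,g)$, $\nabla^{\varphi}_{{\rm grad\,ln}\,F}\tau(\varphi,g)$, ${\rm d}\varphi({\rm grad\,ln}\,F)$, $\nabla^{\varphi}_{{\rm grad\,ln}\,F}{\rm d}\varphi({\rm grad\,ln}\,F)$ produces (\ref{tao2}).

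The one delicate point is the coefficient bookkeeping in (II): contributions to the $|{\rm grad\,ln}\,F|^2$- and $\nabla^{\varphi}_{{\rm grad\,ln}\,F}$-coefficients come from four different sources --- the term $\Delta(F^2)$, the cross term $2\nabla^{\varphi}_{{\rm grad}(F^2)}(\cdot)$ in $\Delta^{\varphi}(F^2\,\cdot)$, the extra term $(m-2)F^2\nabla^{\varphi}_{{\rm grad\,ln}\,F}(\cdot)$ from (\ref{JF}), and the Leibniz expansion of $\nabla^{\varphi}_{{\rm grad\,ln}\,F}(F^2\,\cdot)$ --- and must be combined correctly, e.g.\ $-4+(m-2)=m-6$ for the coefficient of $\nabla^{\varphi}_{{\rm grad\,ln}\,F}\tau(\varphi,g)$, and $-2\Delta{\rm ln}\,F+(2m-8)|{\rm grad\,ln}\,F|^2=-2\bigl(\Delta{\rm ln}\,F-(m-4)|{\rm grad\,ln}\,F|^2\bigr)$ for the coefficient of $\tau(\varphi,g)$; all remaining manipulations are routine. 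As a consistency check, set $m=2$: (\ref{JF}) reduces to the conformal scaling $J^{\varphi}_{\bar g}=F^2J^{\varphi}_g$, and (\ref{tao2}) reduces to $\tau^{2}(\varphi,\bar g)=F^4\{\tau^{2}(\varphi,g)+2(\Delta{\rm ln}\,F+2|{\rm grad\,ln}\,F|^2)\tau(\varphi,g)+4\nabla^{\varphi}_{{\rm grad\,ln}\,F}\tau(\varphi,g)\}$, which also follows directly from $\tau(\varphi,\bar g)=F^2\tau(\varphi,g)$ and the displayed identity.
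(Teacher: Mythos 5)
Your proposal is correct and follows essentially the same route as the paper: part (I) by transforming the rough-Laplacian and curvature traces in the frame $\{Fe_i\}$ (you derive the rough-Laplacian formula from the Koszul formula, where the paper cites it from the literature), and part (II) by substituting $\tau(\varphi,\bar g)=F^2\{\tau(\varphi,g)-(m-2){\rm d}\varphi({\rm grad\,ln}F)\}$ into (\ref{JF}) and expanding with the identity $J^{\varphi}_g(fX)=fJ^{\varphi}_g(X)-(\Delta f)X-2\nabla^{\varphi}_{{\rm grad}f}X$ together with the Leibniz expansions of ${\rm grad}\,F^2$ and $\Delta F^2$, exactly the paper's Equations (\ref{J100})--(\ref{J200}). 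The coefficient bookkeeping you describe ($-4+(m-2)=m-6$, etc.) reproduces (\ref{tao2}), and your $m=2$ consistency check agrees with the paper's Corollary.
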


\begin{proof}

Choose a local orthonormal frames $\{e_i\}$ with respect to $g$ on
$M$, then $\{{\bar e}_i=Fe_i\}$ is a local orthonormal frames with
respect to ${\bar g}$.\\

A direct computation gives the transformation of the tension fields
under the conformal change of a metric as
\begin{eqnarray}\notag
 \tau(\varphi,{\bar g})&=& F^2\{\tau (\varphi, g)-(m-2){\rm
d}{\varphi}({\rm grad\,ln}F)\}.
\end{eqnarray}

Also, a straightforward computation (see, e.g., \cite{BK}) yields
\begin{eqnarray}\label{J2}
&&{\rm Trace}_{\bar
g}(\nabla^{\varphi}\nabla^{\varphi}-\nabla^{\varphi}_{{\bar\nabla}^{M}})X\\\notag
&=&F^2\{{\rm
Trace}_{g}(\nabla^{\varphi}\nabla^{\varphi}-\nabla^{\varphi}_{\nabla^{M}})X-(m-2)\nabla^{\varphi}_{{\rm
grad\,ln}F}X\}.
\end{eqnarray}

On the other hand,
\begin{eqnarray}\label{J3}
{\rm Trace}_{\bar g} R^{N}({\rm d}\varphi, X){\rm d}\varphi & =&
\sum_{i=1}^{m}R^{N}({\rm d}\varphi (Fe_i), X){\rm
d}\varphi(Fe_i)\\\notag & = & F^2{\rm Trace}_{g} R^{N}({\rm
d}\varphi, X){\rm d}\varphi.
\end{eqnarray}
Using Equations (\ref{J2}) and (\ref{J3}) we have
\begin{eqnarray}\notag
&&-J^{\varphi}_{\bar g}(X)={\rm Trace}_{\bar
g}(\nabla^{\varphi}\nabla^{\varphi}-\nabla^{\varphi}_{{\bar\nabla}^{M}})X-
{\rm Trace}_{\bar g} R^{N}({\rm d}\varphi, X){\rm d}\varphi\\\notag
&=& F^2\{{\rm
Trace}_{g}(\nabla^{\varphi}\nabla^{\varphi}-\nabla^{\varphi}_{\nabla^{M}})X-
{\rm Trace}_{g} R^{N}({\rm d}\varphi, X){\rm d}\varphi\}\\\notag
&&-(m-2)F^2\nabla^{\varphi}_{{\rm grad\,ln}F}X\\\notag &&= - F^2J^{
\varphi}_{g}(X)-(m-2)F^2\nabla^{ \varphi}_{{\rm grad\,ln}F}X,
\end{eqnarray}
from which we obtain part (I) of the theorem.\\

To prove the second part of the Theorem, we compute
\begin{eqnarray}\notag\label{J100}
J^{\varphi}(f X)&=&-\{{\rm
Trace}_{g}(\nabla^{\varphi}\nabla^{\varphi}-\nabla^{\varphi}_{\nabla^{M}})(f
X)- {\rm Trace}_{g} R^{N}({\rm d}\varphi, fX){\rm
d}\varphi\}\\\label{J100} &=& fJ^{\varphi}(X)-(\Delta f)X-2
\nabla^{\varphi}_{{\rm grad}f}X, \\\label{LO0} &&\Delta
F^2=2F^2\Delta {\rm ln}F+4F^2\left|{\rm
grad\,ln}F\right|^2,\\\label{LO1} &&\nabla^{ \varphi}_{{\rm
grad}\,F^2}\,{\rm d}{\varphi}({\rm grad\,ln}F)=2F^2\nabla^{
\varphi}_{{\rm grad\,ln}\,F}\,{\rm d}{\varphi}({\rm
grad\,ln}F),\\\label{LO2} && \nabla^{ \varphi}_{{\rm
grad\,ln}F}(F^2{\rm d}{\varphi}({\rm grad\,ln}F)) =2F^2\left| {\rm
grad\,ln}F\right|^2{\rm d}{\varphi}({\rm grad\,ln}F)\\\notag
&&+F^2\nabla^{
\varphi}_{{\rm grad\,ln}F}{\rm d}{\varphi}({\rm grad\,ln}F), \\
&&\nabla^{ \varphi}_{{\rm grad}\,F^2}\,\tau (\varphi,
g)=2F^2\nabla^{ \varphi}_{{\rm grad\,ln}\,F}\,\tau (\varphi, g),
{\rm and}\\\label{J200} && \nabla^{\varphi}_{{\rm
grad\,ln}F}(F^2\tau(\varphi, g))= 2F^2 \left|{\rm grad\,ln}F
\right|^2\tau(\varphi, g) +F^2\nabla^{ \varphi}_{{\rm
grad\,ln}F}\tau(\varphi, g).
\end{eqnarray}
Substituting $X= \tau(\varphi,{\bar g})= F^2\{\tau (\varphi,
g)-(m-2){\rm d}{\varphi}({\rm grad\,ln}F)\}$ into (\ref{JF}) and
using Equations (\ref{J100})$-$(\ref{J200}) we have
\begin{eqnarray}\notag
&&\tau^{2}(\varphi,{\bar g})=-J^{\varphi}_{\bar
g}(\tau(\varphi,{\bar g}))\\\notag &=&-F^2\{J^{
\varphi}_{g}(F^2\tau(\varphi, g)- (m-2)F^2{\rm d}{\varphi}({\rm
grad\,ln}F))\\\notag &&+(m-2)\nabla^{ \varphi}_{{\rm
grad\,ln}F}(F^2\tau(\varphi, g)- (m-2)F^2{\rm d}{\varphi}({\rm
grad\,ln}F))\}\\\notag &=&-F^2\{F^2J^{ \varphi}_{g}(\tau(\varphi,
g))-(\Delta F^2)\tau(\varphi, g))-2\nabla^{ \varphi}_{{\rm
grad}\,F^2}\,\tau(\varphi, g)\\\notag &&-(m-2)F^2J^{
\varphi}_{g}({\rm d}{\varphi}({\rm grad\,ln}F))+(m-2)(\Delta
F^2){\rm d}{\varphi}({\rm grad\,ln}F)\\\notag &&+2(m-2)\nabla^{
\varphi}_{{\rm grad}\,F^2}\,{\rm d}{\varphi}({\rm
grad\,ln}F)\\\notag &&+(m-2)\nabla^{ \varphi}_{{\rm
grad\,ln}F}(F^2\tau(\varphi, g)- (m-2)F^2{\rm d}{\varphi}({\rm
grad\,ln}F))\}\\\notag &=& F^4\{\tau^{2}(\varphi, g)+(m-2)J^{
\varphi}_{g}({\rm d}{\varphi}({\rm grad\,ln}F))\\\notag && +
2(\Delta {\rm ln}F-(m-4)\left|{\rm grad\,ln}F\right|^2)\tau(\varphi,
g)-(m-6)\nabla^{ \varphi}_{{\rm grad\,ln}\,F}\,\tau(\varphi,
g)\\\notag&& -2(m-2)(\Delta {\rm ln}F-(m-4)\left|{\rm
grad\,ln}F\right|^2){\rm d}{\varphi}({\rm grad\,ln}F)\\\notag
&&+(m-2)(m-6)\nabla^{ \varphi}_{{\rm grad\,ln}\,F}\,{\rm
d}{\varphi}({\rm grad\,ln}F)\}.
\end{eqnarray}
This gives the second part of the theorem.
\end{proof}

\begin{cor}
Let $\varphi : (M^{2},g) \longrightarrow (N^{n},h)$ be a map and
${\bar g}=F^{-2}g$ be a conformal change of the metric $g$. Let
$\tau^{2}(\varphi, g)$ and  $\tau^{2}(\varphi,{\bar g})$ be the
bitension fields of $\varphi$ with respect to the metrics $g$ and
${\bar g}$ respectively. Then,
\begin{eqnarray}\label{tao10}
\tau^{2}(\varphi,{\bar g})&=& F^4\{\tau^{2}(\varphi, g) +2 (\Delta
{\rm ln}F+2\left|{\rm grad\,ln}F\right|^2)\tau(\varphi, g))\\\notag
&& +4\nabla^{ \varphi}_{{\rm grad}\,\ln F}\,\tau(\varphi, g)\}.
\end{eqnarray}
\end{cor}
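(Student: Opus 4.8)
The plan is to obtain the corollary as the immediate specialization of Theorem~\ref{confC}(II) to the case $m=2$; essentially nothing is needed beyond setting $m=2$ in equation~(\ref{tao2}) and collecting terms, so I will present it that way rather than redo the computation.

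First I would observe that in~(\ref{tao2}) every summand carrying an explicit factor $(m-2)$ disappears when $m=2$. This removes three of the five terms inside the braces: the Jacobi-operator term $(m-2)J^{\varphi}_{g}({\rm d}\varphi({\rm grad\,ln}F))$, the term $-2(m-2)(\Delta{\rm ln}F-(m-4)\left|{\rm grad\,ln}F\right|^2){\rm d}\varphi({\rm grad\,ln}F)$, and the term $(m-2)(m-6)\nabla^{\varphi}_{{\rm grad\,ln}\,F}{\rm d}\varphi({\rm grad\,ln}F)$. What survives inside the braces is $\tau^{2}(\varphi,g)$ together with the two terms built from $\tau(\varphi,g)$, and the overall prefactor $F^{4}$ is unchanged.

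Next I would evaluate the two remaining numerical coefficients at $m=2$. In the coefficient of $\tau(\varphi,g)$ one has $-(m-4)=2$, so $\Delta{\rm ln}F-(m-4)\left|{\rm grad\,ln}F\right|^2$ becomes $\Delta{\rm ln}F+2\left|{\rm grad\,ln}F\right|^2$, producing the factor $2(\Delta{\rm ln}F+2\left|{\rm grad\,ln}F\right|^2)$; and in the derivative term $-(m-6)=4$, so the coefficient of $\nabla^{\varphi}_{{\rm grad\,ln}\,F}\tau(\varphi,g)$ is $4$. Assembling these gives precisely~(\ref{tao10}). As a sanity check one may also note that at $m=2$ the tension-field transformation recorded in the proof of Theorem~\ref{confC} collapses to $\tau(\varphi,\bar g)=F^{2}\tau(\varphi,g)$, reflecting the conformal invariance of (bi)harmonicity data of surfaces in the harmonic case.

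There is no real obstacle here, since the argument is pure substitution; the only point requiring care is the sign bookkeeping in the coefficients $-(m-4)$ and $-(m-6)$, which I have just carried out. If one preferred a self-contained derivation, one could instead rerun the method of the proof of Theorem~\ref{confC} with $m=2$ from the start, using $\tau(\varphi,\bar g)=F^{2}\tau(\varphi,g)$ together with identity~(\ref{J100}) and relations~(\ref{LO0})--(\ref{J200}); but extracting it from~(\ref{tao2}) is the shortest route.
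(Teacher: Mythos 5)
Your proposal is correct and is essentially the paper's own proof: the paper likewise obtains the corollary by substituting $m=2$ into Equation~(\ref{tao2}), with all $(m-2)$-terms vanishing and the remaining coefficients $-(m-4)=2$ and $-(m-6)=4$ giving exactly~(\ref{tao10}). Your sign bookkeeping is right, so nothing further is needed.
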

\begin{proof}
Substituting $m=2$ into the Equation (\ref{tao2}) we get
\begin{eqnarray}
&&\tau^{2}(\varphi,{\bar g})=F^4\tau^{2}(\varphi, g) +F^2\{ (\Delta
F^2)\tau(\varphi, g))+2\nabla^{ \varphi}_{{\rm
grad}\,F^2}\,\tau(\varphi, g)\}\\\notag && =F^4\{\tau^{2}(\varphi,
g) +2 (\Delta {\rm ln}F+2\left|{\rm
grad\,ln}F\right|^2)\tau(\varphi, g))+4\nabla_{{\rm grad}\,\ln
F}\,\tau(\varphi, g)\}.
\end{eqnarray}
\end{proof}
\begin{cor}\label{bk}
Let $\varphi : (M^{m},g) \longrightarrow (N^{n},h)$ be a harmonic
map with $m\neq 2$, and let ${\bar g}=F^{-2}g$ be a conformal change
of the metric $g$. Then, the map $\varphi : (M^{m},{\bar g})
\longrightarrow (N^{n},h)$ is a biharmonic map if and only if,
\begin{eqnarray}\label{tao20}
&&J^{ \varphi}_{g}({\rm d}{\varphi}({\rm grad\,ln}F))+(m-6)\nabla^{
\varphi}_{{\rm grad\,ln}F}{\rm d}{\varphi}({\rm grad\,ln}F)\\\notag
&&-2\left(\Delta {\rm\,ln}F-(m-4)\left| {\rm
grad\,ln}F\right|^2\right){\rm d}{\varphi}({\rm grad\,ln}F)=0.
\end{eqnarray}
\end{cor}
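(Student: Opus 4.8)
The plan is to derive Corollary~\ref{bk} directly from the general transformation formula~(\ref{tao2}) by specializing to a harmonic map. First I would note that since $\varphi:(M^m,g)\longrightarrow(N^n,h)$ is harmonic, we have $\tau(\varphi,g)=0$ identically. Plugging this into~(\ref{tao2}) annihilates every term containing a factor of $\tau(\varphi,g)$: specifically the term $\tau^2(\varphi,g)={\rm Trace}_g(\nabla^\varphi\nabla^\varphi-\nabla^\varphi_{\nabla^M})\tau(\varphi)-{\rm Trace}_gR^N(\mathrm d\varphi,\tau(\varphi))\mathrm d\varphi$ vanishes, the term $2(\Delta{\rm ln}F-(m-4)|{\rm grad\,ln}F|^2)\tau(\varphi,g)$ vanishes, and the term $-(m-6)\nabla^\varphi_{{\rm grad\,ln}\,F}\tau(\varphi,g)$ vanishes. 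What remains from~(\ref{tao2}) is
\begin{eqnarray}\notag
\tau^2(\varphi,{\bar g})&=&F^4\{(m-2)J^\varphi_g(\mathrm d\varphi({\rm grad\,ln}F))\\\notag
&&-2(m-2)(\Delta{\rm ln}F-(m-4)|{\rm grad\,ln}F|^2)\mathrm d\varphi({\rm grad\,ln}F)\\\notag
&&+(m-2)(m-6)\nabla^\varphi_{{\rm grad\,ln}\,F}\mathrm d\varphi({\rm grad\,ln}F)\}.
\end{eqnarray}

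Next I would factor out the common $(m-2)$, so that $\tau^2(\varphi,{\bar g})=F^4(m-2)\,\{\,J^\varphi_g(\mathrm d\varphi({\rm grad\,ln}F))+(m-6)\nabla^\varphi_{{\rm grad\,ln}F}\mathrm d\varphi({\rm grad\,ln}F)-2(\Delta{\rm ln}F-(m-4)|{\rm grad\,ln}F|^2)\mathrm d\varphi({\rm grad\,ln}F)\,\}$. Here is where the hypothesis $m\neq2$ is used: because $F$ is a positive function (being a conformal factor, it is nowhere zero), the scalar prefactor $F^4(m-2)$ is nowhere zero, so $\tau^2(\varphi,{\bar g})$ vanishes identically if and only if the bracketed expression vanishes identically. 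By~(\ref{BI1}), $\varphi:(M^m,{\bar g})\longrightarrow(N^n,h)$ is biharmonic precisely when $\tau^2(\varphi,{\bar g})=0$, which is exactly the condition~(\ref{tao20}). This establishes the corollary.

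There is essentially no obstacle here: the proof is a direct substitution into Theorem~\ref{confC}(II) followed by a division by a nowhere-vanishing factor. The only point requiring a word of care is the invocation of $m\neq2$ together with the positivity of $F$ to justify cancelling $F^4(m-2)$; if one wished, one could remark that when $m=2$ the bracketed term drops out entirely and one recovers instead the statement of the preceding corollary rather than~(\ref{tao20}). I would write the proof in two or three lines: set $\tau(\varphi,g)=0$ in~(\ref{tao2}), collect the surviving terms, factor out $F^4(m-2)\neq0$, and conclude via the definition of biharmonicity.
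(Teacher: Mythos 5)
Your proposal is correct and is essentially the paper's own proof: the paper simply applies Theorem \ref{confC}(II) with $\tau(\varphi,g)=\tau^{2}(\varphi,g)=0$ and $m\neq 2$, which is exactly your substitution followed by cancelling the nowhere-vanishing factor $F^{4}(m-2)$. Your write-up just spells out the intermediate step that the paper leaves implicit.
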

\begin{proof}
The corollary is obtained by applying Theorem \ref{confC} with
$\tau(\varphi, g)=\tau^2(\varphi, g)=0$ and $m\neq 2$.
\end{proof}

\begin{rk}
Let $\gamma =-\ln F$, then Corollary \ref{bk} recovers Proposition
2.1 in \cite{BK} after taking into account that their convention for
Laplacian on functions is $\Delta f=-{\rm trace} \nabla {\rm d} f$
which is different from ours by a negative sign.
\end{rk}
\begin{example}\label{E1}
The conformal immersion from Euclidean space into the hyperbolic
space
\begin{equation}\label{cfi}
\varphi : (\r^3\times \r^{+},\bar{g}=\delta_{ij}) \longrightarrow
(H^5=\r^4\times \r^{+},h=y_5^{-2}\delta_{\alpha\beta})
\end{equation}
 given by
$\varphi(x_1,\ldots,x_4)=(1,x_1,\ldots,x_4)$ is a proper biharmonic
map. In fact, the associated isometric immersion
\begin{equation}
\varphi : (\r^3\times \r^{+},g=x_4^{-2}\delta_{ij}) \longrightarrow
(H^5=\r^4\times \r^{+},h=y_5^{-2}\delta_{\alpha\beta})
\end{equation}
is totally geodesic and hence harmonic. Here, $\bar{g}=F^{-2}g$ with
$F=x_4^{-1}$. By Corollary \ref{bk}, the conformal immersion
(\ref{cfi}) is biharmonic if and only if Equation (\ref{tao20})
holds, which is equivalent to $J^{ \varphi}_{\bar{g}}({\rm
d}{\varphi}({\rm grad_{\bar{g}}\,ln}F))=0$. A straightforward
computation yields
\begin{eqnarray}\notag
J^{ \varphi}_{\bar{g}}({\rm d}{\varphi}({\rm
grad_{\bar{g}}\,ln}F))=-x_4^{-1}J^{ \varphi}_{\bar{g}}({\rm
d}{\varphi}(\partial_4))+\Delta (x_4^{-1}){\rm
d}{\varphi}(\partial_4)+2\nabla^{ \varphi}_{{\rm
grad}\,(x_4^{-1})}{\rm d}{\varphi}(\partial_4),
\end{eqnarray}
which is identically zero as one can check that
\begin{equation}\notag
-x_4^{-1}J^{ \varphi}_{\bar{g}}({\rm
d}{\varphi}(\partial_4))=-4x_4^{-3}\partial y^5,\;\;\Delta
(x_4^{-1}){\rm d}{\varphi}(\partial_4)=2x_4^{-3}\partial
y^5=2\nabla^{ \varphi}_{{\rm grad}\,(x_4^{-1})}{\rm
d}{\varphi}(\partial_4).
\end{equation}
\end{example}
\begin{example}
The conformal immersion from Euclidean space into the sphere
\begin{equation}\notag
\varphi : (\r^4,\bar{g}=\delta_{ij}) \longrightarrow (S^5\setminus
\{N\}\equiv \r^5,h=\frac{4\delta_{\alpha\beta}}{(1+|y|^2)^2})
\end{equation}
given by $\varphi(u_1,\ldots,u_4)=(u_1,\ldots,u_4,0)$, where
$(u_1,\ldots,u_5)$ are conformal coordinates on $S^5\setminus
\{N\}\equiv \r^5$, is a proper biharmonic map. In fact, the map is
the inverse stereographic projection that maps $\r^4$ into a great
hypersphere in $S^5$. The associated isometric immersion
\begin{equation}\notag
\varphi : (S^4\setminus
\{P\}\equiv\r^4,\bar{g}=\frac{4\delta_{ij}}{(1+|u|^2)^2})
\longrightarrow (S^5\setminus \{N\}\equiv
\r^5,h=\frac{4\delta_{\alpha\beta}}{(1+|y|^2)^2})
\end{equation}
is totally geodesic and hence harmonic. A computation similar to
those in Example \ref{E1} shows that the conformal immersion is
indeed a proper biharmonic map.
\end{example}
\section {Conformal biharmonic immersions}
\begin{prop}
Let $\varphi : (M^{m},g) \longrightarrow (N^{n},h)$ be a conformal
immersion with $\varphi^{*}h=\lambda^{2}g$. Let $\varphi :
(M^{m},{\bar g}) \longrightarrow (N^{n},h)$ be the associated
isometric immersion with mean curvature vector $\eta$, where ${\bar
g}=\varphi^{*}h=\lambda^{2}g$. Then, the conformal immersion
$\varphi : (M^{m},g) \longrightarrow (N^{n},h)$ is biharmonic if and
only if
\begin{eqnarray}\notag
\lambda^{4}\tau^{2}(\varphi,{\bar g})&=& -(m-2)J^{ \varphi}_{g}({\rm
d}{\varphi}({\rm grad\,ln}\lambda)) + 2m\lambda^2(-\Delta {\rm
ln}\lambda-2\left|{\rm grad\,ln}\lambda\right|^2)\eta\\\label{Confi}
&&+m(m-6)\lambda^2\nabla^{ \varphi}_{{\rm grad\,ln}\,\lambda}\,
\eta.
\end{eqnarray}
\end{prop}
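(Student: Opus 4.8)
The plan is to specialize the general bitension transformation formula \eqref{tao2} from Theorem \ref{confC} to the present setting, where the conformal factor plays a double role. Here the ``background'' metric in Theorem \ref{confC} should be taken to be $\bar g=\lambda^2 g$, the pullback metric for which $\varphi$ is an isometric immersion, and the ``new'' metric is $g$ itself. To match the notation of Theorem \ref{confC}, which writes the conformal change as $\bar g_{\text{new}}=F^{-2}g_{\text{old}}$, I would set $g_{\text{old}}=\bar g=\lambda^2 g$ and $g_{\text{new}}=g$, so that $g=F^{-2}\bar g$ with $F=\lambda$. The key subtlety is that all the gradients, Laplacians, and norms appearing in \eqref{tao2} are taken with respect to the \emph{old} metric, i.e.\ with respect to $\bar g$, not $g$; so before substituting I would need the conversion rules ${\rm grad}_{\bar g}\ln\lambda=\lambda^{-2}\,{\rm grad}_g\ln\lambda$ and $|{\rm grad}_{\bar g}\ln\lambda|^2_{\bar g}=\lambda^{-2}|{\rm grad}_g\ln\lambda|^2_g$, together with the transformation of the Laplacian of a function under $\bar g=\lambda^2 g$ on an $m$-manifold, $\Delta_{\bar g}\ln\lambda=\lambda^{-2}\big(\Delta_g\ln\lambda+(m-2)|{\rm grad}_g\ln\lambda|^2_g\big)$.

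Next I would record the three facts that feed into the substitution: first, since $\varphi:(M,\bar g)\to(N,h)$ is an isometric immersion with mean curvature vector $\eta$, its tension field is $\tau(\varphi,\bar g)=m\eta$; second, ${\rm d}\varphi$ is the same object regardless of which metric on $M$ we use, so ${\rm d}\varphi({\rm grad}_{\bar g}\ln\lambda)=\lambda^{-2}{\rm d}\varphi({\rm grad}_g\ln\lambda)$; third, the covariant derivative $\nabla^\varphi$ along $\varphi$ is the pullback of $\nabla^N$ and hence does not depend on the metric on $M$, so a term like $\nabla^\varphi_{{\rm grad}_{\bar g}\ln\lambda}\,\tau(\varphi,\bar g)$ equals $\lambda^{-2}\nabla^\varphi_{{\rm grad}_g\ln\lambda}(m\eta)$. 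Likewise the Jacobi operator $J^\varphi_g$ in \eqref{tao2} is the one for the metric $g$ appearing on the right-hand side, which is exactly the $J^\varphi_g$ in the statement to be proved, so no further conversion of that operator is needed once I have expressed its argument ${\rm d}\varphi({\rm grad}\ln\lambda)$ in terms of $g$-gradients.

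With these substitutions, \eqref{tao2} becomes an identity expressing $\tau^2(\varphi,g)$ — which vanishes precisely when the conformal immersion $\varphi:(M,g)\to(N,h)$ is biharmonic — as $\lambda^4$ times a combination of $\tau^2(\varphi,\bar g)$, the term $(m-2)J^\varphi_g({\rm d}\varphi({\rm grad}\ln\lambda))$, and the terms involving $\tau(\varphi,\bar g)=m\eta$ and $\nabla^\varphi_{{\rm grad}\ln\lambda}\eta$. I would then carefully collect the coefficients: the two ``$\tau(\varphi,\bar g)$-type'' lines of \eqref{tao2} (the one with $2(\Delta\ln F-(m-4)|{\rm grad}\ln F|^2)$ and the one with $-(m-6)\nabla^\varphi_{{\rm grad}\ln F}$), after inserting the $\bar g$-to-$g$ conversions for $\Delta$, the norm, and the extra $\lambda^{-2}$ factors, and after using $\tau(\varphi,\bar g)=m\eta$, should combine to give exactly the claimed $2m\lambda^2(-\Delta\ln\lambda-2|{\rm grad}\ln\lambda|^2)\eta+m(m-6)\lambda^2\nabla^\varphi_{{\rm grad}\ln\lambda}\eta$; and the two ``${\rm d}\varphi({\rm grad}\ln F)$-type'' lines of \eqref{tao2} should cancel against the ${\rm grad}\ln\lambda$-dependent pieces that were hidden inside $J^\varphi_g({\rm d}\varphi({\rm grad}\ln\lambda))$ and inside $\Delta_{\bar g}\ln\lambda$, leaving only the single clean term $-(m-2)J^\varphi_g({\rm d}\varphi({\rm grad}\ln\lambda))$. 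The main obstacle I anticipate is precisely this bookkeeping of signs and of the scattered $\lambda^{-2}$ (equivalently $F^{-2}$) factors coming from converting every $\bar g$-gradient and $\bar g$-Laplacian in \eqref{tao2} into the $g$-quantities used in the statement — it is elementary but error-prone, and one has to be consistent about the direction of the conformal change, since confusing $F$ with $F^{-1}$ flips several signs. Once the conversions are pinned down, the identity follows by direct substitution and rearrangement, and biharmonicity of the conformal immersion is the statement $\tau^2(\varphi,g)=0$, which is the left-hand side of \eqref{Confi} divided by $\lambda^4$ (up to sign) set equal to the right-hand side.
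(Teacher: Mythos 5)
Your overall strategy (specializing Theorem \ref{confC} to the conformal change relating $g$ and $\bar g=\lambda^2 g$) is the right one, but you run the conformal change in the direction opposite to the paper, and this is where your argument breaks. Setting $g_{\rm old}=\bar g$, $g_{\rm new}=g=F^{-2}\bar g$ with $F=\lambda$, every quantity on the right-hand side of (\ref{tao2}) is then a $\bar g$-quantity: in particular the operator written $J^{\varphi}_{g}$ in (\ref{tao2}) becomes $J^{\varphi}_{\bar g}$, the Jacobi operator of your \emph{old} metric $\bar g$, applied to ${\rm d}\varphi({\rm grad}_{\bar g}\ln\lambda)=\lambda^{-2}{\rm d}\varphi({\rm grad}_g\ln\lambda)$. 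Your claim that this ``is exactly the $J^{\varphi}_g$ in the statement to be proved, so no further conversion of that operator is needed'' is therefore incorrect. To reach the term $-(m-2)J^{\varphi}_{g}({\rm d}\varphi({\rm grad}_g\ln\lambda))$ of (\ref{Confi}) you would additionally need part (I) of Theorem \ref{confC} (formula (\ref{JF})) to pass from $J^{\varphi}_{\bar g}$ to $J^{\varphi}_{g}$, and the identity $J^{\varphi}(fX)=fJ^{\varphi}(X)-(\Delta f)X-2\nabla^{\varphi}_{{\rm grad}f}X$ of (\ref{J100}) to extract the non-constant factor $\lambda^{-2}$ from the argument; both steps generate further $\Delta\lambda^{-2}$- and $\nabla^{\varphi}_{{\rm grad}\ln\lambda}$-terms that your proposal never tracks. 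The asserted cancellation of the ``${\rm d}\varphi({\rm grad}\ln F)$-type lines against pieces hidden inside $J^{\varphi}_g$'' is not a computation, and as written the bookkeeping does not close up to (\ref{Confi}).

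The paper avoids all of these conversions by taking the old metric to be $g$ and $F=\lambda^{-1}$, so that $\bar g=F^{-2}g$ matches the convention of Theorem \ref{confC} and every gradient, Laplacian, norm and Jacobi operator on the right of (\ref{tao2}) is already a $g$-quantity, exactly as they appear in (\ref{Confi}). The only extra ingredient is then the tension field of a conformal immersion, $\tau(\varphi,g)=m\lambda^{2}\eta+(2-m){\rm d}\varphi({\rm grad}\,{\rm ln}\,\lambda)$, i.e.\ (\ref{TCI}) --- an identity your proposal never invokes --- whose substitution makes the two ${\rm d}\varphi({\rm grad}\,{\rm ln}\,\lambda)$-lines cancel and produces the coefficients $2m\lambda^{2}(-\Delta\,{\rm ln}\,\lambda-2|{\rm grad}\,{\rm ln}\,\lambda|^{2})$ and $m(m-6)\lambda^{2}$ directly. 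Your route can in principle be repaired (your direction does buy you $\tau(\varphi,\bar g)=m\eta$ for free), but only after supplying the Jacobi-operator and argument conversions above; alternatively, reverse the roles of the two metrics as the paper does and use (\ref{TCI}).
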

\begin{proof}
Substituting $F=\lambda^{-1}$ and $\ln F=-\ln \lambda$ into the
Equation (\ref{tao2}) we have
\begin{eqnarray}\label{gd10}
&&\tau^{2}(\varphi,{\bar g})= \lambda^{-4}\{\tau^{2}(\varphi,
g)-(m-2)J^{ \varphi}_{g}({\rm d}{\varphi}({\rm
grad\,ln}\lambda))\\\notag && + 2(-\Delta {\rm
ln}\lambda-(m-4)\left|{\rm grad\,ln}\lambda\right|^2)\tau(\varphi,
g)+(m-6)\nabla^{ \varphi}_{{\rm grad\,ln}\,\lambda}\,\tau(\varphi,
g)\\\notag&& +2(m-2)(-\Delta {\rm ln}\lambda-(m-4)\left|{\rm
grad\,ln}\lambda\right|^2){\rm d}{\varphi}({\rm
grad\,ln}\lambda)\\\notag &&+(m-2)(m-6)\nabla^{ \varphi}_{{\rm
grad\,ln}\,\lambda}\,{\rm d}{\varphi}({\rm grad\,ln}\lambda)\}.
\end{eqnarray}
Note that the tension field of the conformal immersion $\varphi$ is
given by
\begin{eqnarray}\label{TCI}
\tau(\varphi)=m\lambda^2 \eta+(2-m){\rm d}\varphi \left( {\rm
grad}\, {\rm ln} \lambda\right).
\end{eqnarray}
Substituting (\ref{TCI}) into (\ref{gd10}) we have
\begin{eqnarray}
&&\tau^{2}(\varphi,{\bar g})= \lambda^{-4}\{\tau^{2}(\varphi,
g)-(m-2)J^{ \varphi}_{g}({\rm d}{\varphi}({\rm
grad\,ln}\lambda))\\\notag && + 2m\lambda^2(-\Delta {\rm
ln}\lambda-2\left|{\rm grad\,ln}\lambda\right|^2)\eta\\\notag
&&+m(m-6)\lambda^2\nabla^{ \varphi}_{{\rm grad\,ln}\,\lambda}\,
\eta\}.
\end{eqnarray}
 From this we obtain the proposition.
\end{proof}

\begin{thm}
The conformal immersion $\varphi : (M^{2},g) \longrightarrow
(\r^3,\langle,\rangle_{0})$ into Euclidean space with
$\varphi^{*}\langle,\rangle_{0}=\lambda^{2}g$ is biharmonic if and
only if
\begin{equation}\label{R3}
\begin{cases}
A_{\xi}({\rm grad} H)+ \frac{1}{2}{\rm grad} (H^2)+2H\,A_{\xi}({\rm
grad\;ln} \lambda)=0\\\Delta H -H\,|B|^2+2H(\Delta {\rm
ln}\lambda+2\left|{\rm grad\,ln}\lambda\right|^2)+4g({\rm grad\;ln}
\lambda,{\rm grad} H)=0,
\end{cases}
\end{equation}
where $\xi$ is the unit normal vector field of the surface
$\varphi(M)\subset \mathbb{R}^3$ and $A_{\xi}$ and $H$ are the shape
operator and the mean curvature function of the surface
respectively.
\end{thm}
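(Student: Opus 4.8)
The idea is to route the computation through the \emph{associated isometric immersion} $\varphi:(M^2,\bar g)\to(\r^3,\langle,\rangle_{0})$, where $\bar g=\varphi^{*}\langle,\rangle_{0}=\lambda^2 g$, and then to feed in the classical splitting of the bitension field of a surface in Euclidean space into its part tangent to $\varphi(M)$ and its part normal to it. Since $m=2$, the Jacobi-operator term in (\ref{Confi}) vanishes, so, writing $\eta=H\xi$ for the mean curvature vector of $(M^2,\bar g)\hookrightarrow\r^3$, that proposition (equivalently: the surface case (\ref{tao10}) with $F=\lambda^{-1}$ together with the conformal tension field (\ref{TCI}), which for $m=2$ reads $\tau(\varphi,g)=2\lambda^{2}\eta$) shows that $\varphi:(M^2,g)\to\r^3$ is biharmonic if and only if
\[
\lambda^{4}\,\tau^{2}(\varphi,\bar g)=-4\lambda^{2}\bigl(\Delta\ln\lambda+2|\mathrm{grad}\ln\lambda|^{2}\bigr)H\xi-8\lambda^{2}\,\nabla^{\varphi}_{\mathrm{grad}\ln\lambda}(H\xi),
\]
where $\mathrm{grad}$ and $\Delta$ are taken with respect to $g$.

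I would then evaluate both sides of this identity. Since $\r^3$ is flat, $\tau^{2}(\varphi,\bar g)$ is the rough Laplacian (with respect to $\bar g$) of $\tau(\varphi,\bar g)=2H\xi$; expanding this in a $\bar g$-orthonormal frame by means of the Weingarten formula $\nabla^{\varphi}_{X}\xi=-A_{\xi}X$, the Gauss formula $\nabla^{\varphi}_{X}Y=\bar\nabla^{M}_{X}Y+\bar g(A_{\xi}X,Y)\xi$, and the Codazzi equation for a hypersurface of Euclidean space (which gives $\mathrm{div}\,A_{\xi}=\mathrm{grad}(\mathrm{trace}\,A_{\xi})=2\,\mathrm{grad}\,H$) yields the standard identity
\[
\tau^{2}(\varphi,\bar g)=2\bigl(\Delta H-H|B|^{2}\bigr)\xi-4A_{\xi}(\mathrm{grad}\,H)-2\,\mathrm{grad}(H^{2}),
\]
all operators now being those of $\bar g$. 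On the other side, $\nabla^{\varphi}_{\mathrm{grad}\ln\lambda}(H\xi)=g(\mathrm{grad}\ln\lambda,\mathrm{grad}\,H)\,\xi-H\,A_{\xi}(\mathrm{grad}\ln\lambda)$, whose normal and tangential parts are the two displayed summands.

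Plugging both expansions into the biharmonicity identity and equating first the components along $\xi$ and then the components tangent to $\varphi(M)$ produces two equations; in them the operators acting on $\ln\lambda$ are those of $g$, whereas $H$, $|B|^{2}$ and $A_{\xi}$ belong to the surface, i.e.\ to $\bar g$. Converting the $g$-operators on $\ln\lambda$ into $\bar g$-operators via the two-dimensional conformal rules $\bar{\mathrm{grad}}\,u=\lambda^{-2}\mathrm{grad}_{g}u$ and $\bar\Delta u=\lambda^{-2}\Delta_{g}u$ (hence $A_{\xi}(\mathrm{grad}_{g}\ln\lambda)=\lambda^{2}A_{\xi}(\bar{\mathrm{grad}}\ln\lambda)$, and so on), and dividing each component by the appropriate power of $\lambda$, the $\xi$-component turns into exactly the second equation of (\ref{R3}) and the tangential component into exactly the first equation of (\ref{R3}); as a sanity check, for constant $\lambda$ the system (\ref{R3}) reduces to the known biharmonic-hypersurface equations of a surface in $\r^3$. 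The step I expect to be the main obstacle is the Gauss--Weingarten computation of $\tau^{2}(\varphi,\bar g)$ together with the consistent bookkeeping of the conformal factors relating $g$ and $\bar g$: the signs and numerical coefficients there are easy to mishandle, while once they are correct the separation into tangential and normal parts is routine.
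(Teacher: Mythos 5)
Your proposal is correct and follows essentially the same route as the paper: specialize the conformal transformation formula (\ref{Confi}) to $m=2$ (so the Jacobi term drops and $\tau(\varphi,g)=2\lambda^2\eta$), insert the standard expression $\tau^{2}(\varphi,\bar g)=2(\Delta_{\bar g}H-H|B|^2_{\bar g})\xi-4A_{\xi}(\mathrm{grad}_{\bar g}H)-2\,\mathrm{grad}_{\bar g}(H^2)$ (which the paper simply cites rather than rederives via Gauss--Codazzi), expand $\nabla^{\varphi}_{\mathrm{grad}\ln\lambda}(H\xi)$ by Weingarten, use the two-dimensional conformal rules $\Delta_{\bar g}=\lambda^{-2}\Delta$, $\mathrm{grad}_{\bar g}=\lambda^{-2}\mathrm{grad}$, and split into normal and tangential components. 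The only cosmetic difference is the direction in which you convert between the $g$- and $\bar g$-operators, which in dimension two is just a rescaling and yields the same system (\ref{R3}).
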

\begin{proof}
It follows from (\ref{Confi}) with $m=2$ that the conformal
immersion $\varphi$ is biharmonic if and only if
\begin{eqnarray}\label{GD12}
&&\lambda^{2}\tau^{2}(\varphi,{\bar g})=  -4(\Delta {\rm
ln}\lambda+2\left|{\rm grad\,ln}\lambda\right|^2)\eta-8\nabla^{
\varphi}_{{\rm grad\,ln}\,\lambda}\,\eta,
\end{eqnarray}
where $\tau^{2}(\varphi,{\bar g})$ denotes the bitension field of
the associated isometric immersion $\varphi :
(M^{2},\bar{g}=\lambda^{2}g) \longrightarrow \mathbb{R}^3$ with mean
curvature vector $\eta=H\xi$, where $\xi$ and $H$ are the unit
normal vector field and the mean curvature function of the surface
$\varphi (M)$ respectively. Then, we have (see, e.g., \cite{Ji2},
\cite{CH} and \cite{CMO2})
\begin{eqnarray}\notag
\tau^{2}(\varphi, \bar{g}) = 2(\Delta_{\bar{g}} H
-H\,|B|_{\bar{g}}^2 )\xi - 2[ 2A_{\xi}({\rm grad}_{\bar{g}} H)+ {\rm
grad}_{\bar{g}} (H^2)],
\end{eqnarray}
substitute this into (\ref{GD12}) we have
\begin{eqnarray}\label{gd20}
&&\lambda^{2}(\Delta_{\bar{g}} H -H\,|B|^2 )\xi - \lambda^{2}[
2A_{\xi}({\rm grad}_{\bar{g}} H)+ {\rm grad}_{\bar{g}}
(H^2)]\\\notag &=& -2(\Delta {\rm ln}\lambda+2\left|{\rm
grad\,ln}\lambda\right|^2)H\xi-4\nabla^{ \varphi}_{{\rm
grad\,ln}\,\lambda}\,H\xi.
\end{eqnarray}
Notice that the transformations of Laplacian and the gradient
operators under a conformal change of metrics $\bar{g}=\lambda^{2}g$
in two dimensional manifold are given by
\begin{eqnarray}\label{La}
\Delta_{\bar{g}} u=\lambda^{-2}\Delta u,\;\;\;{\rm
grad}_{\bar{g}}u=\lambda^{-2}{\rm grad}u.
\end{eqnarray}
On the other hand, we have
\begin{eqnarray}\label{La1}
-4\nabla^{ \varphi}_{{\rm grad\,ln}\,\lambda}\,H\xi=-4g({\rm
grad\,ln}\lambda,{\rm grad}H) \xi +4H\,A_{\xi}({\rm
grad\;ln}\lambda).
\end{eqnarray}
Using (\ref{La}) and substituting (\ref{La1}) into (\ref{gd20}) and
comparing the tangential and the normal components  we obtain
equation (\ref{R3}) which completes the proof of the theorem.
\end{proof}
\begin{prop}\label{EZHU}
For $\lambda^2=\big(C_2e^{\pm z/R}-C_1C_2^{-1}R^2e^{\mp z/R}\big)/2$
with constants $C_1, C_2$, the maps $\phi:( D, g=\lambda^{-2}(R^2
d\theta^2+dz^2))\longrightarrow
(\r^3,d\sigma^2=d\rho^2+\rho^2\,d\theta^2+dz^2)$ with $\phi(\theta,
z)=(R, \theta, z)$ is a family of proper biharmonic conformal
immersions of  a cylinder of radius $R$ into Euclidean space $\r^3$.
\end{prop}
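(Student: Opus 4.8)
The plan is to recognise $\phi$ as the conformal immersion whose associated isometric immersion is a round cylinder, record the (elementary) extrinsic geometry of that cylinder, and feed it into the biharmonicity criterion (\ref{R3}). Since $\phi(\theta,z)=(R,\theta,z)$, one computes $\phi^{*}(d\rho^{2}+\rho^{2}\,d\theta^{2}+dz^{2})=R^{2}\,d\theta^{2}+dz^{2}=:\bar g$, a flat metric, and indeed $g=\lambda^{-2}\bar g$; thus the associated isometric immersion $\phi:(M^{2},\bar g)\to\r^{3}$ is (a piece of) the cylinder of radius $R$. Its geometry is classical: with outward unit normal $\xi=\partial_{\rho}$, the shape operator $A_{\xi}$ is diagonal in the frame $\{\partial_{\theta},\partial_{z}\}$ with eigenvalues $1/R$ and $0$; hence the mean curvature $H=\tfrac{1}{2R}$ is constant, the ruling direction $\partial_{z}$ spans $\ker A_{\xi}$, and $|B|^{2}=1/R^{2}$.

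Next I would substitute this data into (\ref{R3}). Because $H$ is constant, $\mathrm{grad}\,H=0$ and $\mathrm{grad}(H^{2})=0$, so the first equation of (\ref{R3}) collapses to $2H\,A_{\xi}(\mathrm{grad}\,\ln\lambda)=0$; since $\lambda=\lambda(z)$ depends on $z$ alone, $\mathrm{grad}\,\ln\lambda$ is a multiple of $\partial_{z}\in\ker A_{\xi}$, so this equation holds identically. Using $\mathrm{grad}\,H=0$ once more, the second equation of (\ref{R3}) reduces to $\Delta\ln\lambda+2|\mathrm{grad}\,\ln\lambda|^{2}=\tfrac{1}{2R^{2}}\,\lambda^{2}$, the factor $\lambda^{2}$ coming from the conformal weight carried by the term $H|B|^{2}$ (as tracked in the proof of that theorem). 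Writing $u=\ln\lambda$ and using that $g=\lambda^{-2}(R^{2}\,d\theta^{2}+dz^{2})$ is conformal to the flat metric $\bar g$, the two-dimensional rules (\ref{La}) give $\Delta u=\lambda^{2}u''$ and $|\mathrm{grad}\,u|^{2}=\lambda^{2}(u')^{2}$, where $'=d/dz$; hence
\[
\Delta u+2|\mathrm{grad}\,u|^{2}=\lambda^{2}\bigl(u''+2(u')^{2}\bigr)=\tfrac{1}{2}(\lambda^{2})'' ,
\]
by the elementary identity $(e^{2u})''=2e^{2u}\bigl(u''+2(u')^{2}\bigr)$. The biharmonicity condition is therefore exactly the linear ODE $(\lambda^{2})''=R^{-2}\lambda^{2}$, with general solution $\lambda^{2}=Ae^{z/R}+Be^{-z/R}$. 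The displayed family $\lambda^{2}=\bigl(C_{2}e^{\pm z/R}-C_{1}C_{2}^{-1}R^{2}e^{\mp z/R}\bigr)/2$ is a reparametrisation of this two-parameter set (with $D$ taken to be the region on which $\lambda^{2}>0$), and a one-line differentiation confirms it satisfies the ODE. Properness is then immediate from (\ref{TCI}) with $m=2$: $\tau(\phi)=2\lambda^{2}H\,\xi=(\lambda^{2}/R)\,\xi\neq0$, so $\phi$ is biharmonic but not harmonic, and the image cylinder is non-minimal.

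The computation is short, so the main difficulty --- really the only one --- is the metric bookkeeping: keeping straight which of $\Delta$, $\mathrm{grad}$ and $|B|^{2}$ is taken with respect to $g$ and which with respect to $\bar g$ when specialising (\ref{R3}) to this situation. Getting the conformal weights right is precisely what puts the factor $\lambda^{2}$ on the right-hand side of $(\lambda^{2})''=R^{-2}\lambda^{2}$; dropping it would give the spurious equation $(\lambda^{2})''=R^{-2}$, whose solutions are quadratics in $z$ rather than the exponentials of the statement. Should one wish to sidestep this, the same ODE drops straight out of the intermediate identity (\ref{GD12}): for the cylinder $\tau^{2}(\phi,\bar g)=-R^{-3}\xi$ and $\nabla^{\phi}_{\mathrm{grad}\,\ln\lambda}\,\eta=0$, so (\ref{GD12}) collapses directly to $\lambda^{2}/R^{2}=2\bigl(\Delta\ln\lambda+2|\mathrm{grad}\,\ln\lambda|^{2}\bigr)$.
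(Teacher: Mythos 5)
Your proposal is correct and follows essentially the same route as the paper: plug the cylinder's extrinsic data ($A_{\xi}$ annihilating the ruling direction, constant $H$, and $|B|^{2}_{g}=\lambda^{2}/R^{2}$) into the criterion (\ref{R3}), reduce the normal equation via the two-dimensional conformal rules (\ref{La}) and the identity $(e^{2u})''=2e^{2u}(u''+2(u')^{2})$ to the ODE $(\lambda^{2})''=R^{-2}\lambda^{2}$, and conclude properness from $H\neq 0$. The only cosmetic differences are sign conventions for $\xi$ and $H$, and that you verify the stated family solves the ODE while the paper derives it by integrating; your bookkeeping of the conformal weight on $|B|^{2}$ matches the paper's.
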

\begin{proof}
Let $\phi:\r^2\supseteq D\longrightarrow \r^3$,
$\phi(\theta,z)=(R\cos\,\theta, R\sin\,\theta, z)$ be the isometric
immersion with the image $\phi(D)$ being a cylinder of radius $R$ in
$3$-space. Using cylindrical coordinates $(\rho, \theta, z)$ on
$\mathbb{R}^3$ we can represent the isometric immersion of the
cylinder as $\phi:\r^2\supseteq D\longrightarrow \r^3$ with
$\phi(\theta, z)=(R, \theta, z)$. It is easy to check that$
E_1=\frac{\partial}{\partial \rho},\;\;
E_2=\frac{1}{\rho}\frac{\partial}{\partial
\theta},\;\;E_3=\frac{\partial}{\partial z}$ constitute a local
orthonormal frame of $\r^3$ and that $e_1=E_2,\;\; e_2=E_3, \xi=E_1$
is an adapted orthonormal frame along the cylinder with $\xi$ being
unit normal vector field. We can check that the induced metric on
the cylinder is ${\bar g}=R^2d\theta^2+dz^2$. Let
$g=\lambda^{-2}(R^2d\theta^2+dz^2)$ be a conformal change of the
metric on the cylinder. Then, we have a conformal immersion $\phi:(
D, g)\longrightarrow (\r^3,
d\sigma^2=d\rho^2+\rho^2\,d\theta^2+dz^2)$ with
$\phi^{*}d\sigma^2=\lambda^2g={\bar g}$. A straightforward
computation gives
\begin{eqnarray}\label{gd30}
\begin{cases}
A_{\xi}e_1=-\frac{1}{R}e_1,\;\;A_{\xi}e_2=0,\\
H=\frac{1}{2}(\langle A_{\xi}e_1,e_1\rangle+\langle
A_{\xi}e_2,e_2\rangle)=-\frac{1}{2R}\ne 0\\
|B|^2=\lambda^2|B|^2_{{\bar g}}=\lambda^2\sum_{i=1}^2|A(e_i)|^2=\lambda^2\frac{1}{R^2},\\
{\rm grad}\,H=0,\\
 \Delta H=0.
 \end{cases}
\end{eqnarray}
Substituting (\ref{gd30}) into (\ref{R3}) we conclude that conformal
immersion $\phi$ is biharmonic if and only if
\begin{equation}\notag
\begin{cases}
A_{\xi}({\rm grad\;ln} \lambda)=0\\
\lambda^2-2R^2(\Delta {\rm ln}\lambda+2\left|{\rm
grad\,ln}\lambda\right|^2)=0.
\end{cases}
\end{equation}
It is not difficult to check that this system is equivalent to
\begin{equation}\notag
\begin{cases}
\lambda(\theta, z)=\lambda(z)\\
1-2R^2[({\rm ln}\lambda)''+2({\rm ln}\lambda)'^2)]=0
\end{cases}
\end{equation}
or,
\begin{equation}\notag
(\lambda^2)''=\frac{1}{R^2}\lambda^2.
\end{equation}
It follows that $\lambda^2$ is a solution of the ordinary
differential equation
\begin{equation}\notag
y''=\frac{1}{R^2}y,
\end{equation}
which has (see e.g., \cite{Cu}) the first integral
\begin{equation}\label{GD10}
y'^2=y^2/R^2+C_1.
\end{equation}
Solving Equation (\ref{GD10}) we have
\begin{equation}
y=\big(C_2e^{\pm z/R}-C_1C_2^{-1}R^2e^{\mp z/R}\big)/2.
\end{equation}
Notice that the conformal immersion has nonzero constant mean
curvature $H$ so it is not harmonic. Therefore, we complete the
proof of the proposition.
\end{proof}
\begin{remark}
It follows from Proposition \ref{EZHU} that the biharmonic conformal
immersions of the cylinder in $\r^3$ are not minimal, thus the
well-known fact that a conformal harmonic immersion of a surface
must be a minimal surface fails to generalize to conformal
biharmonic immersion of a surface. Our proposition also shows that
if B. Y. Chen's conjecture \cite{CH} about biharmonic isometric
immersions into Euclidean space is generalized to biharmonic
conformal immersions, then the answer is negative.
\end{remark}

\section{Biharmonicity and holomorphicity of conformal immersions}

Let $\varphi : (M^2 ,g) \longrightarrow \r^n $ be a conformal
immersion of a Riemann surface. Let $(u, v)$ be the local
coordinates on $M$ and we write $z=u+iv$ in the local complex
parameter. We also use the usual notations
\begin{equation}\notag
\frac{\partial}{\partial z}=\frac{1}{2}(\frac{\partial}{\partial
u}-i \frac{\partial}{\partial v}),\;\; {\rm and}\;\;
\frac{\partial}{\partial {\bar
z}}=\frac{1}{2}(\frac{\partial}{\partial u}+i
\frac{\partial}{\partial v}).
\end{equation}
Then, the well-known Weierstrass representation theorem for
conformal harmonic immersions can be stated as: Let $\varphi : (M^2
,g) \longrightarrow (\r^n, \langle ,\rangle_{0})$ be a harmonic
conformal immersion. Then, the section $ \phi=\frac{\partial
\varphi}{\partial z}=\frac{1}{2}( \varphi_{u}-i
\varphi_{v})=\phi^\alpha(z)\frac{\partial}{\partial y^\alpha}$ is
holomorphic and satisfies
\begin{eqnarray}\label{W1}
\sum _{\alpha=1}^{n}(\phi^\alpha)^2=0,\\\label{W2} \sum
_{\alpha=1}^{n}|\phi^\alpha|^2\ne 0.
\end{eqnarray}
Conversely, given any  holomorphic section $
\phi=\phi^\alpha\frac{\partial}{\partial y^\alpha}:M\longrightarrow
\mathbb{E}$ satisfying (\ref{W1}) and (\ref{W2}) and the periodic
condition:
\begin{equation}\notag
\mathfrak{Re}\int_{\gamma}(\phi^1,\ldots, \phi^n){\rm d}z=0,
\end{equation}
 for any closed path in $M$. Then, the map
 \begin{equation}\notag
 \varphi(z)=2\,\mathfrak{Re}\int_{z_0}^{z}(\phi^1,\ldots, \phi^n){\rm d}z
 \end{equation}
 defines a harmonic conformal immersion of a Riemann surface into
 the Euclidean space.\\

For conformal biharmonic immersions of surfaces into Euclidean
space, we have
\begin{thm}\label{Weie}
$\varphi : (M^2 ,g) \longrightarrow (\r^n, \langle ,\rangle_{0})$ is
a conformal biharmonic immersion with
$\varphi^{*}\langle,\rangle=\lambda^2g$ if and only if the section $
\phi=\frac{\partial \varphi}{\partial z}=\frac{1}{2}( \varphi_{u}-i
\varphi_{v})=\phi^\alpha(z)\frac{\partial}{\partial y^\alpha}$
satisfies Equations (\ref{W1}), (\ref{W2}) and
\begin{eqnarray}\label{W3}
\frac{\partial}{\partial {\bar z}}\frac{\partial}{\partial
z}\left(\lambda^{-2}\frac{\partial \phi^\sigma}{\partial {\bar
z}}\right)=0
\end{eqnarray}
\end{thm}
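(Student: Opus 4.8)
The plan is to separate the statement into the classical Weierstrass part and a purely scalar biharmonicity computation. First I would check that \eqref{W1} and \eqref{W2} encode, respectively, conformality and the immersion property, exactly as in the harmonic case recalled just above: from $\phi^{\alpha}=\frac12(\varphi^{\alpha}_{u}-i\varphi^{\alpha}_{v})$ one computes $\sum_{\alpha}(\phi^{\alpha})^{2}=\frac14\big(|\varphi_{u}|^{2}-|\varphi_{v}|^{2}-2i\langle\varphi_{u},\varphi_{v}\rangle\big)$ and $\sum_{\alpha}|\phi^{\alpha}|^{2}=\frac14\big(|\varphi_{u}|^{2}+|\varphi_{v}|^{2}\big)$, so that, writing $g=\rho^{2}(du^{2}+dv^{2})$ in the holomorphic parameter $z$, \eqref{W1} says the induced form $\varphi^{*}\langle,\rangle_{0}$ is pointwise a multiple of $g$ and \eqref{W2} says the multiple is positive. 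Together these are equivalent to $\varphi$ being a conformal immersion with $\varphi^{*}\langle,\rangle_{0}=\lambda^{2}g$, and they also yield the identity $\varphi^{*}\langle,\rangle_{0}=2\big(\sum_{\alpha}|\phi^{\alpha}|^{2}\big)(du^{2}+dv^{2})$. It therefore remains to show, under these hypotheses, that $\varphi$ is biharmonic if and only if \eqref{W3} holds.

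For the biharmonicity I would use that the target is flat. In \eqref{BI1} the curvature term ${\rm Trace}_{g}R^{N}({\rm d}\varphi,\tau(\varphi)){\rm d}\varphi$ vanishes, and the operator ${\rm Trace}_{g}(\nabla^{\varphi}\nabla^{\varphi}-\nabla^{\varphi}_{\nabla^{M}})$, applied to a section of the trivial flat bundle $\varphi^{*}T\r^{n}$ expressed in the constant Euclidean frame, is the Laplacian $\Delta_{g}$ of $(M,g)$ acting on each component. Hence $\tau(\varphi,g)^{\sigma}=\Delta_{g}\varphi^{\sigma}$ and $\tau^{2}(\varphi,g)^{\sigma}=\Delta_{g}^{2}\varphi^{\sigma}$, so $\varphi$ is biharmonic precisely when every Euclidean coordinate $\varphi^{\sigma}$ is a $g$-biharmonic function. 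One can reach the same point by substituting $\tau(\varphi,g)=\Delta_{g}\varphi$ into the $m=2$ transformation formula \eqref{tao10}, which I would keep as an independent check.

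It then only remains to rewrite $\Delta_{g}^{2}\varphi^{\sigma}=0$ in the complex parameter $z$. With $g=\rho^{2}(du^{2}+dv^{2})$ one has $\Delta_{g}=4\rho^{-2}\partial_{z}\partial_{\bar z}$, and since $\partial_{z}\varphi^{\sigma}=\phi^{\sigma}$ this gives $\Delta_{g}\varphi^{\sigma}=4\rho^{-2}\partial_{\bar z}\phi^{\sigma}$; applying $\Delta_{g}$ once more shows that $\tau^{2}(\varphi,g)^{\sigma}$ equals a nowhere-vanishing function times $\partial_{z}\partial_{\bar z}\big(\rho^{-2}\partial_{\bar z}\phi^{\sigma}\big)$. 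Finally I would express the conformal factor $\rho^{-2}$ through $\lambda$ using $\varphi^{*}\langle,\rangle_{0}=\lambda^{2}g$ (together with the identity $\varphi^{*}\langle,\rangle_{0}=2(\sum_{\alpha}|\phi^{\alpha}|^{2})(du^{2}+dv^{2})$ from the first step), which turns $\tau^{2}(\varphi,g)=0$ into \eqref{W3}.

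I expect the only step calling for genuine care to be this last one, the conformal-factor bookkeeping: getting the exact power and normalization of $\lambda$ right when passing from $\Delta_{g}^{2}\varphi^{\sigma}=0$ to the form \eqref{W3}. The other two ingredients, the Weierstrass computation underlying \eqref{W1}--\eqref{W2} and the flat-target identity $\tau^{2}(\varphi,g)^{\sigma}=\Delta_{g}^{2}\varphi^{\sigma}$, are routine. Since every step above is an equivalence, no separate argument is needed for the converse direction.
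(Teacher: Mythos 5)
Your proposal is correct and follows essentially the same route as the paper's proof: with a flat target the bitension field is the component-wise bi-Laplacian, and writing $\Delta_g=4\rho^{-2}\partial_z\partial_{\bar z}$ in a holomorphic parameter turns $\Delta_g^2\varphi^\sigma=0$ into $\partial_z\partial_{\bar z}\bigl(\rho^{-2}\partial_{\bar z}\phi^\sigma\bigr)=0$; the equivalence of (\ref{W1})--(\ref{W2}) with conformality is handled exactly as you sketch. The one step you single out as delicate is precisely where the paper is loose: its proof declares $g=\varphi^{*}\langle,\rangle_{0}$ and sets $g=\lambda^2(du^2+dv^2)$ with $\lambda^2=\langle\varphi_u,\varphi_u\rangle_0$, i.e.\ it takes $\lambda^2$ to be the conformal factor of $g$ itself in the coordinate $z$ (so $\rho=\lambda$ and (\ref{W3}) is immediate), which is not the normalization $\varphi^{*}\langle,\rangle_{0}=\lambda^2 g$ announced in the statement. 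If you carry out your bookkeeping with the statement's $\lambda$, you get $\rho^{-2}=\lambda^{2}/\bigl(2\sum_{\alpha}|\phi^{\alpha}|^{2}\bigr)$, and that quantity --- not $\lambda^{-2}$ --- is what belongs inside (\ref{W3}); the two coincide only when $g$ is the induced metric. So your plan is sound and in fact more careful than the paper at its only nontrivial point; to land exactly on (\ref{W3}) you must adopt the proof's convention that $\lambda^{2}$ denotes the conformal factor of $g$ in the parameter $z$.
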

\begin{proof}
Using the local conformal parameter $z=u+iv$ and the characteristic
property that $g=\varphi^{*}\langle ,\rangle_{0}$ of the conformal
immersion $\varphi : (M^2 ,g) \longrightarrow (\r^n, \langle
,\rangle_{0}) $ we can be write the metric $g$ as $g=\lambda^2({\rm
d}u^2+{\rm d}v^2)=\lambda^2|{\rm d} z|^2$, where $\lambda^2=\langle
\varphi_{u} ,\varphi_{u}\rangle_{0}=\langle \varphi_{v}
,\varphi_{v}\rangle_{0}$. The Laplacian operator on $(M, g)$ can be
written as
\begin{equation}\notag
\Delta=\lambda^{-2}(\frac{\partial^2}{\partial
u^2}+\frac{\partial^2}{\partial
v^2})=4\lambda^{-2}\frac{\partial}{\partial {\bar
z}}\frac{\partial}{\partial z}.
\end{equation}

Let $\{y^\alpha, \frac{\partial}{\partial y^\alpha}\}$ be local
coordinates in a neighborhood $U$ of $\r^n$ such that $U\cap \varphi
(M)$ is nonempty. We can write the local expression
$\varphi(z)=(\varphi^1(z),\ldots,\varphi^n(z))$ as
$\varphi(z)=\varphi^\alpha(z)\frac{\partial}{\partial y^\alpha}$. If
we define the section $ \phi=\frac{\partial \varphi}{\partial
z}=\frac{1}{2}( \varphi_{u}-i \varphi_{v})$. Then, it is well-known
(see, e.g., \cite{ES}) the tension field of $\varphi$ can be written
as
\begin{equation}\notag
\tau (\varphi)=(\Delta\varphi^1,\ldots,\Delta\varphi^n)=
4\lambda^{-2}\frac{\partial^2 \varphi^\sigma}{\partial {\bar
z}\partial z}\frac{\partial}{\partial
y^\sigma}=4\lambda^{-2}\frac{\partial \phi^\sigma}{\partial {\bar
z}}\frac{\partial}{\partial y^\sigma},
\end{equation}
and hence the bitension field is given by
\begin{eqnarray}\label{bih49}
\tau^2 (\varphi)=(\Delta^2\varphi^1,\ldots,\Delta^2\varphi^n)
=4\lambda^{-2}\frac{\partial}{\partial {\bar
z}}\frac{\partial}{\partial z}\left(4\lambda^{-2}\frac{\partial
\phi^\sigma}{\partial {\bar z}}\right)\frac{\partial}{\partial
y^\sigma}.
\end{eqnarray}
It is easy to see that Equations (\ref{W1}) and (\ref{W2}) is
equivalent to $\varphi$ being a conformal immersion whilst Equation
(\ref{W3}) is equivalent to the biharmonicity of $\varphi$ by
(\ref{bih49}).
\end{proof}

\begin{example}
We can use Weierstrass representation to prove that the map
$\varphi:(\r^2,g=e^{y/R}(dx^2+dy^2)\longrightarrow \r^3$,
$\varphi(x,y)=(R\cos\,\frac{x}{R}, R\sin\,\frac{x}{R}, y)$ is a
proper biharmonic conformal immersion of  $\r^2$ into Euclidean
space $\r^3$.\\

Indeed, in this case, $\varphi_x=(-\sin \frac{x}{R}, \cos
\frac{x}{R}, 0),\;\;\varphi_y=(0, 0, 1)$ and $\varphi$ is a
conformal immersion with $\varphi^{*}\langle,\rangle_{0}={\bar
g}=dx^2+dy^2=\lambda^2g$ for $\lambda^2=e^{-y/R}$. The section $
\phi=\frac{\partial \varphi}{\partial z}=\frac{1}{2}( \varphi_{x}-i
\varphi_{y})=\frac{1}{2}(-\sin \frac{x}{R}, \cos \frac{x}{R}, -i)$
with components
\begin{equation}
\phi^1=-\frac{1}{2}\sin\,\frac{z+{\bar
z}}{2R},\;\;\phi^2=\frac{1}{2}\cos\,\frac{x}{R}=\frac{1}{2}\cos\,\frac{z+{\bar
z}}{2R},\;\;\phi^3=-i/2.
\end{equation}
A straightforward computation yields
\begin{equation}
\begin{cases}
\lambda^{-2}\frac{\partial \phi^1}{\partial
\bar{z}}=-\frac{1}{4R}e^{-(z-{\bar z})i/(2R)}\cos\,\frac{z+{\bar
z}}{2R} =-\frac{i}{8R}(e^{\bar{z}i/R}+e^{-zi/R}),\\
\lambda^{-2}\frac{\partial \phi^2}{\partial
\bar{z}}=-\frac{1}{4R}e^{-(z-{\bar z})i/(2R)}\sin\,\frac{z+{\bar
z}}{2R} =-\frac{i}{8R}(e^{e^{-zi/R}-\bar{z}i/R}),\\
\lambda^{-2}\frac{\partial \phi^3}{\partial \bar{z}}=0.
\end{cases}
\end{equation}
Clearly, we have $\frac{\partial}{\partial {\bar
z}}\frac{\partial}{\partial z}\big(\lambda^{-2}\frac{\partial
\phi^\sigma}{\partial {\bar z}}\big)=0$ for $\sigma=1, 2, 3$ and
hence, by Theorem \ref{Weie}, $\varphi$ is a biharmonic conformal
immersion which is not harmonic as the section $\phi$ is not
holomorphic.
\end{example}
\begin{example}
We can also easily check that the map
$\varphi:(\r^2,g=e^{y/R}(dx^2+dy^2)\longrightarrow \r^6$,
$\varphi(x,y)=(R\cos\,\frac{x}{R}, R\sin\,\frac{x}{R},
y,R\cos\,\frac{x}{R}, R\sin\,\frac{x}{R}, y)$ is a proper biharmonic
conformal immersion of  $\r^2$ into Euclidean space $\r^6$.
\end{example}

\end{document}